\def\A{\mathrm{A}}   \def\Aut{\mathrm{Aut}}
\def\C{\mathrm{C}}  \def\Cay{\mathrm{Cay}} \def\Cen{\mathbf{C}}  
\def\D{\mathrm{D}}
   \def\GL{\mathrm{GL}}  \def\GU{\mathrm{GU}}
 \def\magma{{\sc Magma} }
\def\Nor{\mathbf{N}}
\def\Out{\mathrm{Out}}
  \def\PGL{\mathrm{PGL}}     \def\PSL{\mathrm{PSL}}     \def\PSU{\mathrm{PSU}}
\def\SL{\mathrm{SL}}         
\def\Z{\mathbf{Z}} 
\newtheorem{theorem}{Theorem}[section]
\newtheorem{lemma}[theorem]{Lemma}
\newtheorem{proposition}[theorem]{Proposition}
\theoremstyle{definition}
\newtheorem{conjecture}[theorem]{Conjecture}
\newtheorem{problem}[theorem]{Problem}
\begin{document}

\title{Cubic graphical regular representations of $\PSL_2(q)$}

\author[Xia]{Binzhou Xia}
\address{Beijing International Center for Mathematical Research\\
Peking University\\
Beijing, 100871\\
P. R. China}
\email{binzhouxia@pku.edu.cn}

\author[Fang]{Teng Fang}
\address{Beijing International Center for Mathematical Research\\
Peking University\\
Beijing, 100871\\
P. R. China}
\email{tengfang@pku.edu.cn}

\maketitle

\begin{abstract}
We study cubic graphical regular representations of the finite simple groups $\PSL_2(q)$. It is shown that such graphical regular representations exist if and only if $q\neq7$, and the generating set must consist of three involutions.
\end{abstract}

\textbf{\noindent Keywords: Cayley graph; cubic graph; graphical regular representation; projective special linear group.}

\section{Introduction}

Given a group $G$ and a subset $S\subset G$ such that $1\notin S$ and $S=S^{-1}:=\{g^{-1}\mid g\in S\}$,
the \emph{Cayley graph} $\Cay(G,S)$ of $G$ is the graph with vertex set $G$ such that two vertices $x,y$ are adjacent
if and only if $yx^{-1}\in S$. It is easy to see that $\Cay(G,S)$ is connected if and only if $S$ generates the group $G$. If one identifies $G$ with its right regular representation, then $G$ is a subgroup of $\Aut(\Cay(G,S))$. We call $\Cay(G,S)$ a \emph{graphical regular representation} (\emph{GRR} for short) of $G$ if $\Aut(\Cay(G,S))=G$. The problem of seeking graphical regular representations for given groups has been investigated for a long time. A major accomplishment for this problem is the determination of finite groups without a GRR, see~\cite[16g]{Biggs1993}. It turns out that most finite groups admit at least one GRR. For instance, every finite unsolvable group has a GRR~\cite{Godsil1981}.

In contrast to unrestricted GRRs, the question of whether a group has a GRR of prescribed valency is largely open. Research on this subject have been focusing on small valencies~\cite{FLWX2002,Godsil1983,XX2004}. In~2002, Fang, Li, Wang and Xu~\cite{FLWX2002} issued the following conjecture.

\begin{conjecture}\label{conj1}
(\cite[Remarks on Theorem~1.3]{FLWX2002}) Every finite nonabelian simple group has a cubic GRR.
\end{conjecture}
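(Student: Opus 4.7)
The plan is to attack the conjecture via the classification of finite simple groups, establishing the existence of a cubic GRR family by family. For each finite nonabelian simple group $G$ one needs to produce a subset $S\subset G\setminus\{1\}$ with $|S|=3$, $S=S^{-1}$, $\langle S\rangle=G$, and $\Aut(\Cay(G,S))=G$. Because $|S|=3$ and $S$ is inverse-closed, either $S$ consists of three involutions, or $S=\{a,a^{-1},b\}$ with $b$ an involution and $a$ of order at least $3$. I would organise the argument around four blocks: the alternating groups $\mathrm{A}_n$ ($n\ge 5$), the classical groups of Lie type, the exceptional groups of Lie type, and the $26$ sporadic simple groups.

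For the generating requirement, one may draw on extensive existing machinery: by the work of Liebeck--Shalev, King, and others, every sufficiently large finite simple group is $(2,3)$-generated and, more strongly, generated by three involutions. So for each infinite family I would aim for a uniform construction of $S$ parametrised by the defining data of $G$ (rank, field size, degree), and for the sporadic groups I would select $S$ directly from the \textsc{Atlas} character tables using \magma.

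The substantive content is verifying $\Aut(\Cay(G,S))=G$. Writing $A=\Aut(\Cay(G,S))$ and $A_1$ for the identity stabiliser, one has $A=GA_1$, so it suffices to show $A_1=1$. For a cubic graph Tutte's $s$-transitivity theorem bounds $|A_1|\le 48$, so $A_1$ is controlled. I would then proceed in two stages. First, rule out automorphisms of ``group'' origin by showing that no nontrivial element of $\Aut(G)$ fixes $S$ setwise; in the three-involution case this reduces to a question about how $\Out(G)$ permutes the relevant $G$-conjugacy classes of involutions, which can be read off from the fusion data. Second, show that every nontrivial element of $A_1$ acts nontrivially on some small ball around the identity, by analysing the cycle structure of $\Cay(G,S)$ locally and using the short relations among the elements of $S$ to pin down images.

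The main obstacle, I expect, is the small cases in each family. For particular small $q$, small $n$, or specific sporadic groups, a uniform choice of $S$ may fail one of the two required conditions and the Cayley graph may acquire exceptional automorphisms; such cases must be enumerated and handled individually, possibly by computer. The present paper's focus on $\PSL_2(q)$ is instructive in this regard: it is a template for the kind of detailed analysis each family will demand, and the appearance of a genuine exception at $q=7$ signals that analogous isolated exceptions must be anticipated, catalogued, and disposed of within the larger program.
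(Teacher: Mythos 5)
There is a fundamental problem here: the statement you are asked to address is a \emph{conjecture} quoted from an earlier paper, and the present paper's main contribution is to show that it is \textbf{false}. Theorem~\ref{thm4} proves that $\PSL_2(7)$ — a finite nonabelian simple group — has no cubic GRR. The argument is that by Lemma~\ref{lem6} any candidate connection set $S$ with $\Aut(G,S)=1$ would have to consist of three involutions (since $7\equiv3\pmod 4$, every set $\{x,y,y^{-1}\}$ with $o(x)=2$ admits a nontrivial automorphism fixing $x$ and inverting $y$), and a direct \magma\ check shows that every generating triple of involutions of $\PSL_2(7)$ is stabilised setwise by some involution in $\PGL_2(7)$. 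So no proof of Conjecture~\ref{conj1} as stated can exist, and any program aimed at establishing it must fail at $\PSL_2(7)$.

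Your proposal is internally inconsistent on exactly this point: in the final paragraph you acknowledge ``the appearance of a genuine exception at $q=7$'' yet still frame the goal as proving the conjecture, treating the exception as a small case to be ``disposed of within the larger program.'' A genuine exception cannot be disposed of; it refutes the universal statement. What can be salvaged is the modified assertion the authors themselves pose at the end of the paper, namely that only finitely many finite nonabelian simple groups lack a cubic GRR, and your CFSG-based outline (generation by three involutions via Liebeck--Shalev/King, Tutte's bound $|A_1|\leqslant 48$ for cubic arc-transitive actions, fusion analysis of $\Out(G)$ on involution classes, plus computer checks for small and sporadic cases) is a reasonable sketch of how one might attack \emph{that} problem. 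But as a proof of the statement actually given, the proposal proves something false and therefore cannot be correct.
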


Note that any GRR of a finite simple group must be connected, for otherwise its full automorphism group would be a wreath product. Hence if $\Cay(G,S)$ is a GRR of a finite simple group $G$, then $S$ is necessarily a generating set of $G$. Apart from a few small groups, Conjecture~\ref{conj1} was only known to be true for the alternating groups~\cite{Godsil1983} and Suzuki groups~\cite{FLWX2002}, while no counterexample was found yet. In this paper, we study cubic GRRs for finite projective special linear groups of dimension two. In particular, Theorem~\ref{thm4} shows that Conjecture~\ref{conj1} fails for $\PSL_2(7)$ but holds for all $\PSL_2(q)$ with $q\neq7$.

For any subset $S$ of a group $G$, denote by $\Aut(G,S)$ the group of automorphisms of $G$ fixing $S$ setwise. Each element in $\Aut(G,S)$ is an automorphism of $\Cay(G,S)$ fixing the identity of $G$. Hence a necessary condition for $\Cay(G,S)$ to be a GRR of $G$ is that $\Aut(G,S)=1$. In~\cite{FLWX2002}, the authors showed that this condition is also sufficient for many cubic Cayley graphs of finite simple groups. We state their result for simple groups $\PSL_2(q)$ as follows, which is the starting point of the present paper.

\begin{theorem}\label{thm3}
\emph{(\cite{FLWX2002})} Let $G=\PSL_2(q)$ be a simple group, where $q\neq11$ is a prime power, and $S$ be a generating set of $G$ with $S^{-1}=S$ and $|S|=3$. Then $\Cay(G,S)$ is a GRR of $G$ if and only if $\Aut(G,S)=1$.
\end{theorem}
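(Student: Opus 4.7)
The forward direction is immediate: each $\alpha\in\Aut(G,S)$ is a graph automorphism of $\Cay(G,S)$ fixing $1_G$, so when $\Cay(G,S)$ is a GRR the regularity of $G$ on vertices forces $\alpha=1$, giving $\Aut(G,S)=1$.

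For the converse, suppose $\Aut(G,S)=1$. Set $A=\Aut(\Cay(G,S))$, identify $G$ with its right regular representation $R\leq A$, and write $A_1$ for the stabiliser of $1_G$ in $A$. Since $A_1$ acts faithfully on the three-element neighbourhood $S$, one has $|A_1|\mid 6$, and $\Cay(G,S)$ is a GRR exactly when $A_1=1$. My plan is to reduce everything to proving that $R$ is normal in $A$. Granting $R\trianglelefteq A$, conjugation in $A$ yields a homomorphism $\varphi\colon A_1\to\Aut(R)$. Its kernel $A_1\cap\Cen_A(R)$ is trivial, because $\Cen_{\Sym(G)}(R)$ equals the left regular representation of $G$, which meets the point stabiliser $A_1$ only in the identity. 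Since $A_1$ fixes $S$ setwise, $\varphi(A_1)\leq\Aut(G,S)=1$, forcing $A_1=1$ and hence $A=R\cong G$.

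The substantive content is therefore the assertion that, for $G=\PSL_2(q)$ with $q\neq 11$, every connected cubic Cayley graph of $G$ is a normal Cayley graph. To establish this I would argue by contradiction: assume $R$ is not normal in $A$, take a minimal normal subgroup $N$ of $A$, and analyse $A$ acting on the vertex set $G$. Imprimitive actions can be excluded by combining the connectedness of $\Cay(G,S)$ with the simplicity of $G$ and the odd valency $3$, which prevents $R$ from preserving a nontrivial block system. In the primitive case one invokes the classification of finite primitive permutation groups containing a nonabelian simple regular subgroup, together with the known factorisations of almost simple groups having $\PSL_2(q)$ as a factor.

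The main obstacle is this last case analysis: for each candidate almost simple overgroup of $R$, one must check that the combined constraints $|S|=3$, $S=S^{-1}$ and $\langle S\rangle=G$ are incompatible with an arc-transitive action of $A$ on $\Cay(G,S)$. It is precisely the sporadic configurations involving $\PSL_2(11)$, notably the factorisations inside the Mathieu groups $\M_{11}$ and $\M_{12}$, that cannot be eliminated by this argument, which forces the hypothesis $q\neq 11$. Once every other overgroup is ruled out, $R\trianglelefteq A$ follows and the reduction above completes the proof.
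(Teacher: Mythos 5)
The paper itself does not prove this statement: it is imported directly from~\cite{FLWX2002}, so what you are really attempting is a reconstruction of that reference's argument. Your easy direction and your reduction step are fine and standard: if $R\trianglelefteq A$, conjugation gives $A_1\to\Aut(R)$ whose kernel $A_1\cap\Cen_A(R)$ is trivial because $\Cen_{\Sym(G)}(R)$ is the left regular representation, and the image lands in $\Aut(G,S)=1$. But one preliminary claim is false: the vertex stabiliser of a connected cubic Cayley graph need not act faithfully on $S$, so ``$|A_1|\mid 6$'' is unjustified. For instance $\K_{3,3}=\Cay(\C_6,\{1,3,5\})$ has vertex stabiliser of order $12$, and the Heawood graph, a cubic Cayley graph of $\D_{14}$, has vertex stabiliser of order $24$. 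What is actually available is Tutte's theorem (the stabiliser has order dividing $48$ in the arc-transitive case), and that bound is one of the engines of the proof in~\cite{FLWX2002}.

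The substantive gap is that the normality assertion --- that every connected cubic Cayley graph of $\PSL_2(q)$, $q\neq 11$, is a normal Cayley graph, which is the entire content of the theorem --- is only gestured at. Your dismissal of the imprimitive case is wrong as stated: a regular subgroup $R\cong G$ preserves the partition of the vertex set $G$ into right cosets of any subgroup of $G$, so neither simplicity of $G$ nor the odd valency prevents nontrivial block systems (the Heawood graph is again an imprimitive arc-transitive cubic Cayley graph). The genuine argument bounds $|A|$ via Tutte's theorem, analyses the core of $R$ in $A$ or normal quotients, and then invokes factorisation and classification results for the resulting almost simple overgroups; this is lengthy case work, not a formality. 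You correctly locate the obstruction at $q=11$ via the configurations $\PSL_2(11)<\M_{11},\M_{12}$, but identifying where the exception lives is not the same as eliminating every other overgroup for all remaining $q$. As written, the proposal defers exactly the part of the argument that makes the theorem true, so it cannot stand as a proof; either cite~\cite{FLWX2002}, as the paper does, or carry out that case analysis in full.
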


The following are our three main results.

\begin{theorem}\label{thm4}
For any prime power $q\geqslant5$, $\PSL_2(q)$ has a cubic GRR if and only if $q\neq7$.
\end{theorem}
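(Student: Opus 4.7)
The plan is to split the equivalence into necessity ($q=7$) and sufficiency ($q\geqslant 5$, $q\neq7$), both anchored in Theorem~\ref{thm3}; the case $q=11$ is treated separately since Theorem~\ref{thm3} excludes it.

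The central reduction for sufficiency is the following. Let $S=\{a,b,c\}$ be any $3$-element generating set of $G=\PSL_2(q)$ with $S=S^{-1}$. Any $\sigma\in\Aut(G,S)$ fixing each element of $S$ must act trivially on $\langle S\rangle=G$, so the restriction map $\Aut(G,S)\hookrightarrow\Sym(S)\cong\Sy_3$ is injective. Assume in addition that $a,b,c$ are three involutions. A transposition of $S$, say $a\leftrightarrow b$ with $c$ fixed, forces $|ac|=|bc|$, while a $3$-cycle forces $|ab|=|bc|=|ca|$. Hence if the three product-orders $|ab|,|bc|,|ca|$ are pairwise distinct, then $\Aut(G,S)=1$, and for $q\neq 7,11$ Theorem~\ref{thm3} supplies a cubic GRR of $G$.

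The task for $q\geqslant 5$ with $q\notin\{7,11\}$ thus reduces to constructing three involutions $a,b,c\in\PSL_2(q)$ that generate the group and whose pairwise products have distinct orders. Involutions in $\PSL_2(q)$ are parametrised by trace-zero matrices in $\SL_2(q)$, so I would exhibit specific matrices $A,B,C$ of trace $0$ for which the traces of $AB$, $BC$, $CA$ realise three distinct target orders, such as $p$, $(q-1)/d$, and $(q+1)/d$ with $d=\gcd(2,q-1)$, adjusting the recipe in small characteristics where these orders coincide or collide with involutions. That $\langle a,b,c\rangle=\PSL_2(q)$ rather than a proper subgroup is then verified against Dickson's classification of subgroups of $\PSL_2(q)$: each potential maximal overgroup (dihedral, Borel, $\A_4$, $\Sy_4$, $\A_5$, or $\PSL_2(q')/\PGL_2(q')$ for a proper subfield $q'$) is excluded either by the orders being too large to embed, or by a direct inspection of the chosen matrices.

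The two remaining values are handled as follows. For $q=11$, where Theorem~\ref{thm3} does not apply, I would exhibit one concrete triple of involutions in $\PSL_2(11)$ and verify directly---by a short combinatorial argument on the short cycles of the Cayley graph or via a \magma check---that its automorphism group is exactly $\PSL_2(11)$. For $q=7$, Theorem~\ref{thm3} is applied in the reverse direction: since $|G|=168$ and element orders lie in $\{1,2,3,4,7\}$, only finitely many $\Aut(G)$-conjugacy classes of $3$-element generating sets $S=S^{-1}$ need be considered (of the two shapes: three involutions, or $\{x,x^{-1},y\}$ with $|x|\geqslant 3$ and $y$ an involution), and in each case one exhibits a non-trivial element of $\Aut(G,S)$, typically arising from an outer automorphism in $\PGL_2(7)\setminus\PSL_2(7)$. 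The principal obstacle is the uniform construction at the heart of the sufficiency step: producing three involutions that simultaneously have pairwise distinct product-orders \emph{and} generate the whole group, coherently across the residues of $q$; a handful of small $q$ (e.g.\ $q=5,8,9,13$) may ultimately need individual verification rather than a single closed-form recipe.
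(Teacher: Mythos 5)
Your reduction is sound as far as it goes: $\Aut(G,S)$ embeds in $\Sym(S)$ because an automorphism fixing $S$ pointwise fixes $\langle S\rangle=G$, a transposition of two involutions forces two of the orders $o(ab),o(bc),o(ca)$ to coincide, and a $3$-cycle forces all three to coincide; so pairwise distinct product-orders give $\Aut(G,S)=1$ and Theorem~\ref{thm3} applies for $q\neq7,11$. But the heart of the theorem is exactly the step you leave as a plan: exhibiting, for every prime power $q\geqslant5$, three involutions that \emph{simultaneously} realise three distinct product-orders and generate $\PSL_2(q)$. This is a genuine simultaneous condition (the traces of $AB$, $BC$, $CA$ for trace-zero $A,B,C$ are not freely prescribable without care, and your target orders need ad hoc adjustment in small cases such as $q=5$ or even $q$), and excluding every overgroup in Dickson's list (dihedral, Borel, $\A_4$, $\Sy_4$, $\A_5$, subfield subgroups) for a uniform family of matrices across all characteristics and residues of $q$ is substantial work that is not carried out --- you yourself flag it as the principal obstacle, together with an unspecified list of small $q$ needing individual treatment. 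As written, the sufficiency direction is therefore not proved; the necessity at $q=7$ and the case $q=11$ are likewise only asserted as finite computations (which is acceptable in principle, since the paper also uses \magma there, but they are not done).

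The paper sidesteps your obstacle entirely, and this is where the two approaches genuinely differ. It fixes two involutions $x,y$ generating the maximal dihedral subgroup $H=\D_{2(q+1)/d}$ of Lemma~\ref{lem3}, so that \emph{every} involution $z\notin H$ automatically satisfies $\langle x,y,z\rangle=G$ by maximality --- generation comes for free --- and then it bounds from above the number of involutions $z$ with $\Aut(G,\{x,y,z\})\neq1$ (those in the set $L$ of images of $x,y$ under suitable automorphisms, or centralising an involutory automorphism swapping $x$ and $y$), yielding the explicit lower bound of Proposition~\ref{prop1}. Lemma~\ref{lem5} makes that bound positive for $q\geqslant29$ or $q=23$, the values $q\in\{8,9,11,13,16,17,19,25,27\}$ are settled by \magma, $q=5,11$ are quoted from~\cite{FLWX2002}, and for $q=7$ the shape $\{x,y,y^{-1}\}$ with $o(y)>2$ is excluded theoretically by Lemma~\ref{lem6} (using $7\equiv3\pmod4$) before a \magma check on involution triples. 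If you wish to salvage your route, you must either produce the explicit uniform triples with distinct product-orders and a full Dickson-type generation argument, or replace the construction by a counting argument of the paper's kind.
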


\begin{theorem}\label{thm2}
For each prime power $q$ there exist involutions $x$ and $y$ in $\PSL_2(q)$ such that the probability for a randomly chosen involution $z$ to make
$$
\Cay(\PSL_2(q),\{x,y,z\})
$$
a cubic GRR of $\PSL_2(q)$ tends to $1$ as $q$ tends to infinity.
\end{theorem}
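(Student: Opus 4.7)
Theorem~\ref{thm3} reduces the problem to verifying $\langle x, y, z \rangle = G$ and $\Aut(G, \{x,y,z\}) = 1$, so my plan is to choose involutions $x, y \in G := \PSL_2(q)$ so that the pointwise stabilizer $A_{(x,y)}$ of $\{x,y\}$ in $A := \Aut(G) = \PGaL_2(q)$ is trivial, and then bound the number of involutions $z$ that violate either condition. The finitely many small $q$ (including $q = 11$, which Theorem~\ref{thm3} excludes) can be handled separately, as they do not affect the limit $q \to \infty$.

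The central difficulty is choosing $x, y$ well. The idea is to take them so that $\langle x, y \rangle$ is a maximal dihedral subgroup of $G$ --- the full normalizer of a maximal torus for $q$ odd, or a subgroup generated by two well-chosen transvections for $q$ even. Then $A_{(x,y)} = C_A(\langle x, y\rangle)$, and by the structure of $\PGaL_2(q)$ this centralizer is trivial provided $\langle x, y\rangle$ is not defined over any proper subfield, which can be ensured by a generic choice together with a small case check over the congruence classes of $q$ and the field automorphisms.

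Granted $A_{(x,y)} = 1$, any nontrivial $\sigma \in \Aut(G, \{x,y,z\})$ induces a nontrivial permutation of $\{x,y,z\}$; I would count bad $z$'s by cases on this permutation. If $\sigma$ swaps $x, y$ and fixes $z$, then $\sigma$ is the unique element of the setwise stabilizer of $\{x,y\}$ in $A$ above $A_{(x,y)}$, and the bad $z$'s are the involutions fixed by this single $\sigma$, numbering $O(q)$. If $\sigma$ fixes $x$ (resp.\ $y$) and transposes the other two, then $\sigma^2 \in A_{(x,y)} = 1$, so $\sigma$ is an involution of $A_x$ (resp.\ $A_y$), and $z = \sigma(y)$ (resp.\ $\sigma(x)$) is determined by $\sigma$; this contributes $O(|A_x|) = O(q \log q)$ bad $z$'s. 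If $\sigma$ acts as a $3$-cycle, then $\sigma^3 \in A_{(x,y)} = 1$ forces $\sigma$ to have order $3$ and send $x$ to $y$, so again $z = \sigma(y)$ lies in a set of size at most $|A_x| = O(q \log q)$. For the generation condition, Dickson's classification of subgroups of $\PSL_2(q)$ places any proper over-group of $\langle x, y \rangle$ among Borel, dihedral, subfield, $\mathrm{A}_4$, $\mathrm{S}_4$, or $\mathrm{A}_5$ subgroups; only $O(\log q)$ of these contain the fixed subgroup $\langle x, y \rangle$, and each contains $O(q)$ involutions, giving at most $O(q \log q)$ bad $z$'s.

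The total count of bad $z$'s is $O(q \log q)$, while $G$ has $\Theta(q^2)$ involutions, so the bad probability is $o(1)$ as $q \to \infty$, and Theorem~\ref{thm3} yields the GRR conclusion. The critical step is the initial choice of $x, y$: once $A_{(x,y)} = 1$ is secured, the remaining work is a routine combination of Dickson's subgroup theorem and centralizer counts in $\PGL_2(q)$.
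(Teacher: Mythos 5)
Your overall architecture (reduce via Theorem~\ref{thm3}, take $x,y$ generating a maximal dihedral subgroup, then count bad $z$'s by the induced permutation of $\{x,y,z\}$, getting $O(q\log q)$ bad choices against $\Theta(q^2)$ involutions) is the same as the paper's, but the step you call the critical one is false for odd $q$, and the whole case analysis is made to rest on it. You claim one can choose involutions $x,y$ with $\langle x,y\rangle$ maximal dihedral such that $A_{(x,y)}=\Cen_A(\langle x,y\rangle)=1$, the obstruction being only subfield issues. In fact, for $q$ odd the centralizer of a maximal dihedral subgroup $H=\D_{q+1}$ in $\Aut(\PSL_2(q))$ is never trivial: the rotation subgroup of $H$ lies in a cyclic maximal torus $\C_{q+1}\leqslant\PGL_2(q)$, every reflection of $H$ inverts this torus, and inversion fixes its unique involution $t$; hence conjugation by $t$ is a nontrivial element of $\Aut(\PSL_2(q))$ fixing both $x$ and $y$ (when $q\equiv3\pmod4$ this $t$ is even the centre of $H$ itself). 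The defect is not repairable by a cleverer choice of pair: for odd $q$, any two involutions $x,y$ have $o(xy)\in\{1,2,p\}$ or $o(xy)$ dividing $(q\pm1)/2$, and in each case one finds a nontrivial element of $\Aut(\PSL_2(q))$ centralizing both (the torus involution as above, the element $xy$ itself, or, in the unipotent case with $f>1$, a field automorphism after normalizing the pair), so $A_{(x,y)}=1$ is unattainable for all odd non-prime $q$. Consequently your opening reduction ``any nontrivial $\sigma\in\Aut(G,\{x,y,z\})$ induces a nontrivial permutation of $\{x,y,z\}$ because $A_{(x,y)}=1$'' has no valid starting point in the odd case, and the uniqueness claim in your swap case collapses as well.

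The gap is fixable, and the fix is exactly what the paper does: one never needs the pointwise stabilizer of $\{x,y\}$ to be trivial, only that no nontrivial automorphism fixes $x$, $y$ \emph{and} $z$, which is automatic once $\langle x,y,z\rangle=G$ (guaranteed by maximality of $H$ for any involution $z\notin H$). With that, a nontrivial $\sigma$ preserving $\{x,y,z\}$ either moves $z$ into $\{x,y\}$ --- your $O(|\Cen_A(x)|)$ coset count, the paper's set $L$ with $|L|\leqslant4df(q+1)$ --- or fixes $z$ and must \emph{swap} $x$ and $y$, in which case $\sigma$ is an involution; the paper then shows by explicit computation in $\Nor_A(H)=\C_{q+1}\rtimes\C_{2f}$ that there are at most $d=\gcd(2,q-1)$ such swapping involutions and bounds their fixed involutions via Lemma~\ref{lem1}(c). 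If you replace your triviality claim by this generation argument and rework the swap case accordingly, your counting goes through; as written, the proof does not.
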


\begin{proposition}\label{thm1}
Let $q\geqslant5$ be a prime power and $G=\PSL_2(q)$. If $\Cay(G,S)$ is a cubic GRR of $G$, then $S$ is a set of three involutions.
\end{proposition}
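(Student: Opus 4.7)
The plan is to prove the contrapositive: if $\Cay(G,S)$ is a cubic GRR of $G = \PSL_2(q)$ and $S$ is not a set of three involutions, derive a contradiction. Since $|S|=3$, $1 \notin S$, $S = S^{-1}$, and the only non-identity self-inverse elements are involutions, $S$ must have the form $\{a, a^{-1}, b\}$ with $a \in G$ of order at least~$3$ and $b \in G$ an involution. Connectivity of $\Cay(G,S)$ (automatic for any GRR of the simple group $G$, as noted in the paper) yields $\langle a, b \rangle = G$.

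Any $\sigma \in \Aut(G, S)$ acts on $\Cay(G, S)$ as a graph automorphism fixing the identity vertex and permuting its three neighbours, so if $\Cay(G,S)$ is a GRR one must have $\Aut(G, S) = 1$. This necessary condition is elementary and does not rely on Theorem~\ref{thm3}, so it is available for the full range $q \geq 5$. It therefore suffices to produce a non-identity element of $\Aut(G, S)$. Because automorphisms preserve orders of elements and $o(b) = 2 < 3 \leq o(a)$, any such $\sigma$ must fix $b$ and swap $a$ with $a^{-1}$.

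To construct $\sigma$, I would work inside $\Aut(G) = \PGaL_2(q)$ and seek $g \in \PGL_2(q)$ whose conjugation action on $G \trianglelefteq \PGL_2(q)$ realizes $\sigma$; equivalently, $g \in \Cen_{\PGL_2(q)}(b)$ with $g a g^{-1} = a^{-1}$. Since $a$ and $a^{-1}$ share their trace in $\SL_2(q)$, they are conjugate in $\PGL_2(q)$, so the set $I(a) := \{g \in \PGL_2(q) : g a g^{-1} = a^{-1}\}$ is a non-empty coset of $\Cen_{\PGL_2(q)}(a)$, and the construction reduces to proving $I(a) \cap \Cen_{\PGL_2(q)}(b) \neq \emptyset$. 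Non-triviality of the resulting $\sigma$ is then automatic because $a \neq a^{-1}$.

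The main obstacle is precisely this non-empty-intersection claim. I would attack it by case analysis on the cyclic subgroup $\langle a \rangle \leq G$ according to its type, namely split-toric, non-split-toric, or unipotent. In every case $\Nor_{\PGL_2(q)}(\langle a \rangle)$ contains elements inverting $a$, and the generation hypothesis $\langle a, b \rangle = G$ is used to force at least one such inverting element to lie in $\Cen_{\PGL_2(q)}(b)$. Should the inner-conjugation approach fail for some configuration, field automorphisms or diagonal outer automorphisms in $\PGaL_2(q) \setminus \PGL_2(q)$ supply further candidates for $\sigma$. A handful of small $q$ in which the subgroup lattice degenerates may need direct verification, possibly via \magma.
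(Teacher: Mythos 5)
Your overall strategy is the same as the paper's: write $S=\{b,a,a^{-1}\}$ with $o(b)=2$ and $o(a)\geqslant3$, note $\langle a,b\rangle=G$, observe that a GRR forces $\Aut(G,S)=1$, and then look for a nontrivial automorphism fixing $b$ and inverting $a$ (which, as you say, is the only possible shape of a nontrivial element of $\Aut(G,S)$). The reduction to finding $g\in\Cen_{\PGL_2(q)}(b)$ with $a^g=a^{-1}$ is sound. The problem is that your argument stops exactly where all the work lies: you yourself call the claim $I(a)\cap\Cen_{\PGL_2(q)}(b)\neq\emptyset$ ``the main obstacle'' and then only describe how you would attack it (case analysis on whether $\langle a\rangle$ is split-toric, non-split-toric or unipotent, a fallback to field automorphisms, \magma\ for small $q$), without carrying any of it out. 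This is not a routine verification: the statement is genuinely false without the generation hypothesis when $q$ is even or $q\equiv1\pmod 4$ --- the paper remarks that for $q=8$ and $q=13$ there exist $x,y$ with $o(x)=2$, $o(y)>2$, $\langle x,y\rangle\neq\PSL_2(q)$ and $\Aut(\PSL_2(q),\{x,y,y^{-1}\})=1$ --- so any correct argument must exploit $\langle a,b\rangle=G$ in a concrete way, which your sketch does not do beyond asserting that generation ``is used to force'' the desired element to exist. As it stands this is a plan for a proof, not a proof.

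For comparison, the paper settles precisely this step by explicit computation, splitting on $q$ rather than on the type of $\langle a\rangle$: for $q$ even it normalizes the involution to a unipotent matrix and conjugates by an explicit upper-triangular matrix built from the entries of the other generator (generation forces the relevant entry to be nonzero); for $q\equiv1\pmod4$ it normalizes the involution to a diagonal element of order two and conjugates by an explicit anti-diagonal matrix (generation forces $bc\neq0$ in the entries of the second generator); and for $q\equiv3\pmod4$ it transports the problem to $\PSU_2(q)\cong\PSL_2(q)$ (Lemma~\ref{lem6}), where an inverting element centralizing the involution exists for \emph{every} second element, with no generation hypothesis needed. In particular, inner-diagonal automorphisms in $\PGL_2(q)$ always suffice, so your contingency of passing to field automorphisms is unnecessary --- but establishing their existence is the entire substance of the proposition, and that content is missing from your write-up.
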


Theorem~\ref{thm2} shows that it is easy to make GRRs for $\PSL_2(q)$ from three involutions. On the other hand, Proposition~\ref{thm1} says that one can only make GRRs for $\PSL_2(q)$ from three involutions, which is a response to~\cite[Problem~1.2]{Godsil1983} as well. (Note that for a cubic Cayley graph $\Cay(G,S)$, the set $S$ either consists of three involutions, or has the form $\{x,y,y^{-1}\}$ with $o(x)=2$ and $o(y)>2$.) The proof of Theorem~\ref{thm2} is at the end of Section~\ref{sec1}, and the proofs of Theorem~\ref{thm4} and Proposition~\ref{thm1} are in Section~\ref{sec2}. We also pose two problems concerning cubic GRRs for other families of finite simple groups at the end of this paper.

\section{Preliminaries}

The following result is well known, see for example~\cite[II~\S7 and~\S8]{Huppert1967}.

\begin{lemma}\label{lem3}
Let $q\geqslant5$ be a prime power and $d=\gcd(2,q-1)$. Then $\PGL_2(q)$ has a maximal subgroup $M=\D_{2(q+1)}$. Moreover, $M\cap\PSL_2(q)=\D_{2(q+1)/d}$, and for $q\notin\{7,9\}$ it is maximal in $\PSL_2(q)$.
\end{lemma}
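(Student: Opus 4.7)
The plan is to realize $M$ concretely as the normalizer in $\PGL_2(q)$ of a nonsplit torus, and then invoke Dickson's classification of subgroups of $\PGL_2(q)$ and of $\PSL_2(q)$ for the two maximality assertions.

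First I would view $\GF(q^2)$ as a two-dimensional vector space over $\GF(q)$. Multiplication by $\alpha\in\GF(q^2)^\times$ is $\GF(q)$-linear and yields an embedding $\GF(q^2)^\times\hookrightarrow\GL_2(q)$ whose image modulo scalars is a cyclic subgroup $T\leq\PGL_2(q)$ of order $(q^2-1)/(q-1)=q+1$. The Frobenius $\alpha\mapsto\alpha^q$ of $\GF(q^2)$ is also $\GF(q)$-linear, so it induces an involution $\tau\in\PGL_2(q)$ with $\tau^{-1}t\tau=t^q$ for $t\in T$; since the norm $\alpha^{q+1}$ lies in $\GF(q)^\times$, we have $t^{q+1}=1$ in $T$ and $\tau$ inverts $T$. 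This realises $M:=\langle T,\tau\rangle\cong\D_{2(q+1)}$ inside $\PGL_2(q)$. Maximality of $M$ in $\PGL_2(q)$ is where I would lean on Dickson's theorem: every proper subgroup of $\PGL_2(q)$ is a Borel subgroup, a split dihedral $\D_{2(q-1)}$, a nonsplit dihedral $\D_{2(q+1)}$, one of $\A_4,\Sy_4,\A_5$, or a subfield-type subgroup, and for $q\geq5$ an inspection of orders in this list forces $M$ to be maximal.

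For the intersection with $\PSL_2(q)$ the case $d=1$ is immediate, so assume $d=2$. The determinant of the matrix embedding of $\GF(q^2)^\times$ coincides with the norm map $\alpha\mapsto\alpha^{q+1}$, which is surjective onto $\GF(q)^\times$; hence $T\not\subseteq\PSL_2(q)$ and $T\cap\PSL_2(q)$ is the unique index-$2$ subgroup of $T$, cyclic of order $(q+1)/2$. Using $\tau t\tau=t^{-1}$ one checks $(t\tau)^2=1$ for every $t\in T$, so all elements of $M\setminus T$ are involutions inverting $T$. Since $[M:M\cap\PSL_2(q)]=2$, there is at least one such involution in $M\cap\PSL_2(q)$, and together with the cyclic subgroup of order $(q+1)/2$ it generates $\D_{q+1}=\D_{2(q+1)/d}$.

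Finally, maximality of $M\cap\PSL_2(q)$ in $\PSL_2(q)$ for $q\notin\{7,9\}$ is read directly from Dickson's list of maximal subgroups of $\PSL_2(q)$: the dihedral group $\D_{2(q+1)/d}$ is maximal there precisely when $q\notin\{7,9\}$, the two exceptions reflecting $\PSL_2(7)\cong\PSL_3(2)$ (in which $\D_8<\Sy_4$) and $\PSL_2(9)\cong\A_6$ (in which $\D_{10}<\A_5$). The main obstacle here is not technical but bibliographic: a proof ab initio requires redoing Dickson's full subgroup classification for $\PSL_2(q)$, which is exactly what is being imported from Huppert's reference.
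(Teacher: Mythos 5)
Your proposal is correct, and it is essentially a fleshed-out version of what the paper does: the paper offers no argument at all, only the citation to Huppert (II \S7--\S8, i.e.\ Dickson's classification), and your maximality claims in both $\PGL_2(q)$ and $\PSL_2(q)$ are likewise imported from that classification. What you add beyond the citation is genuine and correct: the explicit realisation of $M$ as the normaliser of the nonsplit torus $T\cong\C_{q+1}$ obtained from $\GF(q^2)^\times$ acting on $\GF(q^2)$ over $\GF(q)$, with the Frobenius inverting $T$, and the computation $M\cap\PSL_2(q)=\D_{2(q+1)/d}$ via the identification of the determinant with the norm map (surjectivity of the norm gives $T\not\leqslant\PSL_2(q)$ for odd $q$, and the count of involutions in $M\setminus T$ lying in $\PSL_2(q)$ then pins down the dihedral structure). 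Two small points of care: your phrasing of Dickson's theorem for $\PGL_2(q)$ (``every proper subgroup is \dots'') is not literally correct --- $\PSL_2(q)$ itself, and many cyclic and small dihedral subgroups, are proper subgroups not on that list --- what you want is that every proper subgroup is contained in one of the listed (maximal) ones, and then the order inspection you describe goes through; and in that inspection one must also rule out $M\leqslant\PSL_2(q)$ for odd $q$, which your norm argument already supplies. Neither point is a gap in the idea, and the exceptional containments $\D_8<\Sy_4$ in $\PSL_2(7)$ and $\D_{10}<\A_5$ in $\PSL_2(9)$ that you cite are exactly the source of the excluded values $q\in\{7,9\}$.
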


The next lemma concerns facts about involutions in two-dimensional linear groups which is needed in the sequel.

\begin{lemma}\label{lem1}
Let $q=p^f\geqslant5$ for some prime $p$ and $G=\PSL_2(q)$. Then the following hold.
\begin{itemize}
\item[(a)] There is only one conjugacy class of involutions in $G$.
\item[(b)] For any involution $g$ in $G$,
$$
\Cen_G(g)=
\begin{cases}
\C_2^f,\quad\textup{if $p=2$},\\
\D_{q-1},\quad\textup{if $q\equiv1\pmod{4}$},\\
\D_{q+1},\quad\textup{if $q\equiv3\pmod{4}$}.
\end{cases}
$$
\item[(c)] If $p>2$, then for any involution $\alpha$ in $\PGL_2(q)$, the number of involutions in $\Cen_G(\alpha)$ is at most $(q+3)/2$.
\end{itemize}
\end{lemma}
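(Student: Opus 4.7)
\noindent\emph{Proof plan.} Parts (a) and (b) are classical and follow from the standard description of the subgroup structure of $\PSL_2(q)$ (see~\cite[II~\S8]{Huppert1967}). For $p=2$, the Sylow $2$-subgroup of $G$ is elementary abelian of order $q=2^f$ and self-centralising, giving the first line of (b), and its normaliser acts transitively on the non-identity elements, giving (a). For $p$ odd, any involution of $G$ lifts to an element $A\in\SL_2(q)$ of order $4$ (so $A^2=-I$), which lies in a maximal torus $T$ of split type when $q\equiv1\pmod{4}$ and non-split type when $q\equiv3\pmod{4}$; in either case all such elements form a single $\SL_2(q)$-conjugacy class, giving (a). The centraliser in $\SL_2(q)$ is exactly $T$, of order $q\mp1$, and modulo $\{\pm I\}$ the normaliser $N_{\SL_2(q)}(T)$ becomes dihedral of order $q\mp1$; this coincides with $\Cen_G([A])$ because any $g\in N_{\SL_2(q)}(T)\setminus T$ satisfies $gAg^{-1}=A^{-1}=-A$, hence centralises $[A]$ in the quotient. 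This proves (b).

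For (c), the plan is to reduce the problem to the elementary observation that a dihedral group $\D_{2n}$ of order $2n$ contains exactly $n$ involutions when $n$ is odd and $n+1$ when $n$ is even. I would split on whether $\alpha$ lies in $G$. If $\alpha\in G$, part~(b) identifies $\Cen_G(\alpha)$ as $\D_{q-1}$ when $q\equiv1\pmod{4}$ (with $(q-1)/2$ even) or as $\D_{q+1}$ when $q\equiv3\pmod{4}$ (with $(q+1)/2$ even), giving involution counts $(q+1)/2$ and $(q+3)/2$ respectively; the bound $(q+3)/2$ holds and is tight. If $\alpha\in\PGL_2(q)\setminus G$, the same analysis inside $\PGL_2(q)$ yields $\Cen_{\PGL_2(q)}(\alpha)\cong\D_{2(q\pm1)}$, and $\Cen_G(\alpha)$ is an index-$2$ subgroup. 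One checks that this subgroup is again dihedral, of order $q\pm1$ with rotation subgroup of odd order, so it contains at most $(q+1)/2$ involutions, strictly below $(q+3)/2$.

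The only delicate step is identifying $\Cen_G(\alpha)$ as the correct index-$2$ subgroup of $\D_{2(q\pm1)}$ in the last case. I would handle this concretely by picking an explicit matrix representative — for example $\mathrm{diag}(1,-1)$ in the split case — together with explicit generators of the centralising torus and a Weyl-type element, and then detecting membership in $G$ via the square-class of the determinant in $\F_q^*$. The calculation shows that the rotation subgroup of $\Cen_G(\alpha)$ consists precisely of the squares in the ambient cyclic torus of $\PGL_2(q)$, which is of odd order, and the involution count follows.
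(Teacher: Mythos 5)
Your plan is correct, but it takes a different route from the paper. The paper disposes of (a) and (b) by citing Guralnick--Zieve \cite[Lemma~A.3]{GZ2010}, and for (c) it uses the same citation, which gives $\Cen_G(\alpha)=\D_{q+\varepsilon}$ with $\varepsilon=\pm1$ for \emph{every} involution $\alpha\in\PGL_2(q)$ (whether or not $\alpha\in G$), and then applies the crude count that a dihedral group of order $q+\varepsilon$ has at most $1+(q+\varepsilon)/2\leqslant(q+3)/2$ involutions; no case distinction between $\alpha\in G$ and $\alpha\in\PGL_2(q)\setminus G$ is needed. You instead re-derive the structural facts from scratch (Sylow $2$-subgroups for $p=2$; lifts of order $4$ in $\SL_2(q)$ sitting in split or non-split tori for $p$ odd) and then do a finer parity analysis of the dihedral centralisers, which has the added benefit of showing the bound $(q+3)/2$ is attained exactly when $\alpha\in G$ and $q\equiv3\pmod4$, while the paper's uniform estimate does not distinguish this. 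Your deferred computations do check out: in the case $\alpha\in\PGL_2(q)\setminus G$ the relevant torus of $\PGL_2(q)$ meets $G$ in its index-$2$ subgroup of order $(q\mp1)/2$, which is odd precisely because of the congruence forcing $\alpha$ outside $G$, so the involution count $(q\mp1)/2$ is indeed below $(q+3)/2$. Two small points to tighten if you write this out in full: in (a) for $p$ odd you should justify that the trace-zero class does not split into two $\SL_2(q)$-classes (the centraliser in $\GL_2(q)$ has surjective determinant, so it does not); and in (b) you prove only the inclusion $\Nor_{\SL_2(q)}(T)/\{\pm I\}\subseteq\Cen_G([A])$ --- for the reverse inclusion note that any $g\in\SL_2(q)$ with $gAg^{-1}=\pm A$ normalises $\Cen_{\SL_2(q)}(A)=T$. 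Neither is a genuine obstacle.
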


\begin{proof}
Parts~(a) and~(b) can be found in~\cite[Lemma~A.3]{GZ2010}. To prove part~(c), assume that $p>2$ and $\alpha$ is an involution in $\PGL_2(q)$. By \cite[Lemma~A.3]{GZ2010} we have $\Cen_G(\alpha)=\D_{q+\varepsilon}$ with $\varepsilon=\pm1$. As a consequence, the number of involutions in $\Cen_G(\alpha)$ is at most $1+(q+\varepsilon)/2\leqslant(q+3)/2$. This completes the proof.
\end{proof}

\section{GRRs from three involutions}\label{sec1}

Recall from Lemma~\ref{lem3} that $\PSL_2(q)$ has a maximal subgroup $\D_{2(q+1)/d}$, where $d=\gcd(2,q-1)$. The following proposition plays the central role in this paper.

\begin{proposition}\label{prop1}
Let $q=p^f\geqslant11$ for some prime $p$, $d=\gcd(2,q-1)$, $G=\PSL_2(q)$, and $H=\D_{2(q+1)/d}$ be a maximal subgroup of $G$. Then for any two involutions $x,y$ with $\langle x,y\rangle=H$, there are at least
$$
\frac{q^2-4d^2fq-(d+2)q-4d^2f-3d^2+2d-1}{d}
$$
involutions $z\in G$ such that $\langle x,y,z\rangle=G$ and $\Aut(G,\{x,y,z\})=1$.
\end{proposition}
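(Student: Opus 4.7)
The plan is to obtain the lower bound by starting from the set $T$ of involutions of $G$ and subtracting upper bounds on the $z \in T$ that are \emph{bad}, in the sense that either $\langle x,y,z\rangle \ne G$ or $\Aut(G,\{x,y,z\}) \ne 1$. Lemma~\ref{lem1}(a,b) gives $|T| = |G|/|\Cen_G(x)|$, which is $q^2-1$ for $p=2$ and $q(q\mp 1)/2$ for odd $q$. Since $H$ is maximal in $G$ by Lemma~\ref{lem3} and $\langle x,y\rangle = H$, the generation condition fails iff $z \in H$, so the bad $z$'s for generation are the involutions of the dihedral group $H = \D_{2(q+1)/d}$, of which there are at most $(q+1)/d + 1$.

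For the automorphism condition I would classify every $\sigma \in \Aut(G)\setminus\{1\}$ with $\sigma(\{x,y,z\}) = \{x,y,z\}$ by its induced permutation of the $3$-set. The central input is that $C := \Cen_{\Aut(G)}(H)$ has order at most $2$: by irreducibility of $H$ on the natural module, $\Cen_{\PGL_2(q)}(H)$ lies in a non-split maximal torus of $\PGL_2(q)$ of order $q+1$ and consists of elements inverted trivially by the reflections of $H$, so has order at most $2$; and because $H$ contains a cyclic subgroup of order $(q+1)/d > \sqrt{q}+1$ (using $q \ge 11$), $H$ is not contained in any proper subfield subgroup of $G$, so no non-trivial field automorphism centralizes $H$. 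Set also $C_x := \Cen_{\Aut(G)}(x)$ and $C_y := \Cen_{\Aut(G)}(y)$; by orbit-stabilizer on the single $\Aut(G)$-orbit of involutions of $G$ (Lemma~\ref{lem1}(a)), $|C_x| = |C_y|$ equals $fq$ for $p=2$ and $2f(q\pm 1)$ for odd $q$.

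With this in hand I handle each orbit type of $\sigma$ on $\{x,y,z\}$ separately. If $\sigma$ fixes $x,y,z$ pointwise, then $\sigma \in C\setminus\{1\}$ and its fixed involutions of $G$ number at most $(q+3)/2$ by Lemma~\ref{lem1}(c). If $\sigma$ swaps $\{x,y\}$ and fixes $z$, the possible swappers form a coset of $C$ in its normalizer, of size at most $|C|$; each is an involution of $\Aut(G)$, contributing at most $(q+3)/2$ fixed involutions of $G$ via Lemma~\ref{lem1}(c) when $\sigma \in \PGL_2(q)$, or a smaller bound coming from involutions in $\PSL_2(\sqrt{q})$ when $\sigma$ is field-twisted. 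If $\sigma$ swaps $\{x,z\}$ and fixes $y$ (resp.\ swaps $\{y,z\}$ and fixes $x$), then $\sigma \in C_y$ (resp.\ $C_x$) and $z = \sigma(x)$ (resp.\ $z = \sigma(y)$) is determined, producing at most $|C_y|-1$ (resp.\ $|C_x|-1$) bad $z$'s. Finally, if $\sigma$ induces a $3$-cycle on $\{x,y,z\}$, then $\sigma^3 \in C$ so $\sigma$ has order $3$ or $6$; the constraint $\sigma(x) = y$ places $\sigma$ in a coset of $C_x$, giving at most $|C_x|$ bad $z$'s. Summing the four contributions and subtracting from $|T| - |T \cap H|$ produces the stated numerator.

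The main obstacle I anticipate is the swap of $\{x,y\}$: one must first argue that the swapper is always an involution of $\Aut(G)$ (using $\sigma^2 \in C$ with $|C|\le 2$ and ruling out order $4$), and then bound uniformly the number of involutions of $G$ it fixes across all three flavors of automorphism in $\PGaL_2(q)$ --- inner, diagonal, and field-twisted. A secondary technical point is that the transposition and $3$-cycle cases naively count $\sigma$'s rather than distinct $z$'s; since two $\sigma$'s yield the same $z$ iff they differ by an element of $C_x \cap C_y = C$, this overcounts by a factor $|C| \le 2$, which only tightens the final bound.
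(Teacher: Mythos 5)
Your overall strategy (count involutions, discard those in $H$, discard the images of $x,y$ under the relevant transporter cosets, and discard fixed points of automorphisms swapping $x$ and $y$) is essentially the paper's, but two of your estimates have genuine gaps. The first and most serious is the swap-$\{x,y\}$ case. The paper's key structural step is that any such $\sigma$ normalizes $H=\langle x,y\rangle$, and that $\Nor_{\Aut(G)}(H)\cong\C_{q+1}\rtimes\C_{2f}$ has \emph{all} of its involutions inside $\PGL_2(q)$; hence no ``field-twisted'' swapper can occur, and Lemma~\ref{lem1}(c) (or, for $p=2$, the count $q-1$ from $\Cen_G(\alpha)=\C_2^f$) applies. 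You never invoke the normalizer of $H$, so you are forced to allow field-twisted swappers, and your claimed bound for them is wrong: for $f$ even, an involutory field-type automorphism of $\PSL_2(q)$ has fixed-point subgroup of type $\PGL_2(q^{1/2})$, which contains roughly $q$ involutions of $G$ --- about twice the $(q+3)/2$ of Lemma~\ref{lem1}(c), not ``a smaller bound coming from $\PSL_2(\sqrt q)$''. If such a swapper could occur, the stated numerator would not be reachable, so this case must be excluded, not estimated; your proposal contains no argument for that. (Also note Lemma~\ref{lem1}(c) assumes $p>2$, so your citation of it for $\sigma\in\PGL_2(q)$ does not cover $p=2$.) Relatedly, your reduction to ``$\sigma$ is an involution'' is left open: $\sigma^2\in C$ with $|C|\le2$ only gives order dividing $4$. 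The clean fix, which you never use, is that once $z\notin H$ one has $\langle x,y,z\rangle=G$, so $\sigma^2$, fixing $x,y,z$ pointwise, centralizes $G$ and is trivial.

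The second gap is quantitative. The same observation ($z\notin H$ forces any pointwise stabilizer of $\{x,y,z\}$ to be trivial) shows your ``identity permutation'' case contributes nothing, yet you subtract up to $(q+3)/2$ for it. You assert that summing your four contributions ``produces the stated numerator'', but you never carry out the arithmetic, and it does not: for $q\equiv3\pmod4$ the exact involution count is $q(q-1)/2$, exactly the paper's lower bound $q(q-1)/d$, so there is no slack, and your extra $(q+3)/2$ leaves you short of the stated bound by about $(q-1)/2$. So even granting your other estimates, the proposition as stated is not proved. Both defects trace to the same omission: you never exploit $\langle x,y,z\rangle=G$ to kill automorphisms, and you never pass to $\Nor_{\Aut(G)}(H)$ to control the swapping involutions --- the two ingredients that make the paper's count close.
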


\begin{proof}
Fix involutions $x,y$ in $H$ such that $\langle x,y\rangle=H$. Identify the elements in $G$ with their induced inner automorphisms of $G$. In this way, $G$ is a normal subgroup of $A:=\Aut(G)$, and the elements of $A$ act on $G$ by conjugation. Denote by $V$ the set of involutions in $G$, and
\begin{eqnarray*}
L&=&\{y^\alpha\mid\alpha\in A,x^\alpha=x\}\cup\{y^\alpha\mid\alpha\in A,x^\alpha=y\}\\
&&\cup\ \{x^\alpha\mid\alpha\in A,y^\alpha=x\}\cup\{x^\alpha\mid\alpha\in A,y^\alpha=y\}.
\end{eqnarray*}
By Lemma~\ref{lem1}, $x$ and $y$ are conjugate in $A$. Hence
\begin{eqnarray}\label{eq1}
|L|&\leqslant&|\{\alpha\in A\mid x^\alpha=x\}|+|\{\alpha\in A\mid x^\alpha=y\}|\\
\nonumber&&+\ |\{\alpha\in A\mid y^\alpha=x\}|+|\{\alpha\in A\mid y^\alpha=y\}|\\
\nonumber&=&4|\Cen_A(x)|\leqslant4|\Out(G)||\Cen_G(x)|=4df|\Cen_G(x)|\leqslant4df(q+1)
\end{eqnarray}
by virtue of Lemma~\ref{lem1}. Denote by $I$ the set of involutions $\alpha\in A$ such that $x^\alpha=y$ and $y^\alpha=x$.

Take an arbitrary $\alpha\in I$. Then $H^\alpha=\langle x^\alpha,y^\alpha\rangle=\langle y,x\rangle=H$, i.e., $\alpha\in\Nor_A(H)$. Write $\Nor_A(H)=\langle a\rangle\rtimes\langle b\rangle$ with $\langle a\rangle=\C_{q+1}<\PGL_2(q)$ and $\langle b\rangle=\C_{2f}$, so that $H=\Nor_A(H)\cap G=\langle a^d\rangle\rtimes\langle b^f\rangle$. Since $x$ and $y$ are two involutions generating $H$, we have $x=a^{di}b^f$ and $y=a^{dj}b^f$ for some integers $i$ and $j$. Moreover, either $\alpha=a^kb^f$ for some integer $k$, or $\alpha=a^{(q+1)/2}$ with $q$ odd, because $\alpha$ is an involution in $\Nor_A(H)$. However, if $\alpha=a^{(q+1)/2}$ with $q$ odd, then $\alpha$ will fix $x$ and $y$, respectively. Thus $\alpha=a^kb^f$ for some integer $k$, and in particular, $\alpha\in\PGL_2(q)$. In view of $a^{b^f}=a^{-1}$, one computes that
$$
x^\alpha=(a^{di}b^f)^{a^kb^f}=(a^{di})^{a^kb^f}(b^f)^{b^fa^{-k}}=(a^{di})^{b^f}(b^f)^{a^{-k}}=a^{2k-di}b^f.
$$
This together with the assumption $x^\alpha=y=a^{dj}b^f$ gives $(a^k)^2=a^{d(i+j)}$. As a consequence, $|I|\leqslant d$. If $p=2$, then $\alpha\in\PGL_2(q)=G$, and we see from Lemma~\ref{lem1} that $|V\cap\Cen_G(\alpha)|=q-1$. If $p$ is odd, then Lemma~\ref{lem1} asserts that $|V\cap\Cen_G(\alpha)|\leqslant(q+3)/2$. To sum up, we have
\begin{equation*}
|\bigcup\limits_{\alpha\in I}(V\cap\Cen_G(\alpha))|\leqslant|I|\cdot\frac{q+3d-3}{d}\leqslant d\cdot\frac{q+3d-3}{d}=q+3d-3.
\end{equation*}
Due to Lemma~\ref{lem1} and the orbit-stabilizer theorem, $|V|\geqslant|G|/(q+1)=q(q-1)/d$, whence
\begin{eqnarray}\label{eq2}
&&|(V\setminus H)\setminus\bigcup\limits_{\alpha\in I}\Cen_G(\alpha)|\geqslant|V|-|V\cap H|-|\bigcup\limits_{\alpha\in I}(V\cap\Cen_G(\alpha))|\\
\nonumber&\geqslant&\frac{q(q-1)}{d}-\left(\frac{q+1}{d}+1\right)-(q+3d-3)=\frac{q^2-(d+2)q-3d^2+2d-1}{d}.
\end{eqnarray}

Suppose that $z$ is an involution of $G$ outside $H$. It follows that $\langle x,y,z\rangle=G$. If $\alpha$ is a nonidentity in $\Aut(G,\{x,y,z\})$, then either $z^\alpha=z$ or $z\in L$. In the former case, $\alpha$ is an involution as $\alpha$ interchanges $x$ and $y$, and hence $z\in\bigcup_{\alpha\in I}\Cen_G(\alpha)$. This implies that for any $z$ in $(V\setminus H)\setminus\bigcup_{\alpha\in I}\Cen_G(\alpha)$ but not in $L$, one has $\langle x,y,z\rangle=G$ and $\Aut(G,\{x,y,z\})=1$. Now combining~\eqref{eq1} and~\eqref{eq2} we deduce that the number of choices of such $z$ is at least
\begin{eqnarray*}
|(V\setminus H)\setminus\bigcup\limits_{\alpha\in I}\Cen_G(\alpha)|-|L|&\geqslant&\frac{q^2-(d+2)q-3d^2+2d-1}{d}-4df(q+1)\\
&=&\frac{q^2-4d^2fq-(d+2)q-4d^2f-3d^2+2d-1}{d},
\end{eqnarray*}
as the lemma asserts.
\end{proof}

As an immediate consequence of Proposition~\ref{prop1}, we give a proof for Theorem~\ref{thm2}.

\vskip0.1in
\noindent\textbf{Proof of Theorem~\ref{thm2}:} Assume without loss of generality that $q\geqslant11$, and take $x,y$ to be any two involutions generating the maximal subgroup $\D_{2(q+1)/d}$ of $\PSL_2(q)$, where $d:=\gcd(2,q-1)$. Let $J=J_{q,x,y}$ be the set of involutions $z$ such that $\Cay(\PSL_2(q),\{x,y,z\})$ is a GRR of $\PSL_2(q)$. By Theorem~\ref{thm3}, $J$ equals the set of involutions $z$ such that $\langle x,y,z\rangle=\PSL_2(q)$ and $\Aut(\PSL_2(q),\{x,y,z\})=1$. Hence applying Proposition~\ref{prop1} we obtain
\begin{eqnarray*}
|J|&\geqslant&\frac{q^2-4d^2fq-(d+2)q-4d^2f-3d^2+2d-1}{d}\\
\nonumber&\geqslant&\frac{q^2-16fq-4q-16f-9}{d}\geqslant\frac{q^2-16q\log_2q-4q-16\log_2q-9}{d}.
\end{eqnarray*}
Moreover, combining Lemma~\ref{lem1} and the orbit-stabilizer theorem shows that the number of involutions in $\PSL_2(q)$ is at most $|\PSL_2(q)|/(q-1)=q(q+1)/d$. Thus for a randomly chosen involution $z$, the probability such that $\Cay(\PSL_2(q),\{x,y,z\})$ is
a cubic GRR of $\PSL_2(q)$ is at least
$$
\frac{|J|}{q(q+1)/d}\geqslant\frac{q^2-16q\log_2q-4q-16\log_2q-9}{q(q+1)},
$$
which tends to $1$ as $q$ tends to infinity.
\qed

\section{Conclusion}\label{sec2}

\begin{lemma}\label{lem5}
Let $q=p^f$ for some prime $p$, and $d=\gcd(2,q-1)$. If $q\geqslant29$ or $q=23$, then
$$
q^2-4d^2fq-(d+2)q-4d^2f-3d^2+2d-1>0.
$$
\end{lemma}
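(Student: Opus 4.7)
Let $E(q,f,d)$ denote the left-hand side. Since $d=\gcd(2,q-1)\in\{1,2\}$, the plan is to split on the parity of $q$ and, in each case, use the relation $q=p^f$ to bound $f\leqslant\log_p q$, reducing the statement to an elementary one-variable inequality in $q$.

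Case $d=1$ (so $p=2$ and $q=2^f\geqslant 32$, since $q\geqslant 29$): the inequality becomes $q^2>4fq+3q+4f+2$. Substituting $f=\log_2 q$, it suffices to show $q^2>4q\log_2 q+3q+4\log_2 q+2$. Direct evaluation at $q=32$ (where $f=5$) already gives a comfortable margin, and for $q=2^f$ with $f\geqslant 6$ the ratio $q/f=2^f/f$ is strictly increasing, so the leading term $q^2$ dominates by an ever-widening gap and the inequality persists.

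Case $d=2$ (so $q$ is odd and $q=23$ or $q\geqslant 29$): the inequality becomes $q^2>16fq+4q+16f+9$. If $f=1$ (i.e.\ $q$ is prime), this reduces to $q^2-20q-25>0$, which holds for all $q\geqslant 23$. If $f\geqslant 2$, then $p\geqslant 3$, so $f\leqslant\log_3 q$, and it suffices to verify $q^2>16q\log_3 q+4q+16\log_3 q+9$. The finitely many small prime powers $q=p^f$ with $f\geqslant 2$ falling in a bounded range (e.g.\ $q\in\{49,81,121,125,169,243,289,\ldots\}$) are checked by direct substitution, and above some moderate threshold the crude bound $16q\log_3 q<q^2/2$ closes the argument.

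The main obstacle is purely clerical: one must enumerate the small prime powers of each prime $p$ for which $f$ is large enough to compete with $q$, and verify the inequality in each such case. There is no conceptual difficulty; the key input is the logarithmic bound $f\leqslant\log_p q$ that comes from $p^f=q$, which converts the bivariate expression into a univariate one dominated by $q^2$.
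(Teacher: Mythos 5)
Your proposal is correct, and in substance it is the same kind of elementary verification as the paper's: bound the bivariate expression by controlling $d$ and comparing $f$ with $q$, then dispose of a finite list of small prime powers by hand. The bookkeeping differs, though. You split by parity, keeping the sharper bound $q^2>4fq+3q+4f+2$ in the even case and $q^2>16fq+4q+16f+9$ in the odd case, and you control $f$ via the logarithmic bound $f\leqslant\log_p q$ (with a separate easy subcase $f=1$, where $q^2-20q-25>0$ for $q\geqslant23$). The paper instead uses $d\leqslant2$ once to reduce everything to $q^2-16fq-4q-16f-9>0$, observes that $q>22f$ for every prime power in the stated range outside the explicit list $\{32,64,81,128,256\}$ (which it checks directly), and then closes with the one-line manipulation $4q+16f+9<6fq$, giving $q^2-16fq-4q-16f-9>q(q-22f)>0$. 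The paper's linear bound $q>22f$ with a concrete exceptional set makes the finite check completely explicit, whereas your write-up leaves two points at the level of ``routine'': the threshold beyond which $16q\log_3 q<q^2/2$ suffices (you also need, and should say, that the remaining terms $4q+16\log_3 q+9$ are below $q^2/2$, which is trivial), and the exact finite list of composite odd prime powers to substitute (it is just $q\in\{49,81,121,125\}$ once the threshold is pinned near $q\geqslant169$). Similarly, in the even case the phrase ``ever-widening gap'' should be replaced by the one-line induction $2^f\geqslant4f+4$ for $f\geqslant5$. These are presentational gaps, not mathematical ones; with them filled, your proof stands on its own.
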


\begin{proof}
It is direct to verify the conclusion for $q\in\{32,64,81,128,256\}$. Hence we only need to prove the lemma with $q\notin\{32,64,81,128,256\}$. Note that under this assumption, $q>22f$. Since $q>25/2\geqslant(16f+9)/(6f-4)$, or equivalently $4q+16f+9<6fq$, we have
\begin{eqnarray*}
&&q^2-4d^2fq-(d+2)q-4d^2f-3d^2+2d-1\\
&\geqslant& q^2-16fq-4q-16f-9>q^2-16fq-6fq=q(q-22f)>0
\end{eqnarray*}
as desired.
\end{proof}

\begin{lemma}\label{lem6}
Let $q$ be an odd prime power, and $x$ be an involution of $\PSL_2(q)$. If $q\equiv3\pmod{4}$, then for any $y\in\PSL_2(q)$, there exists $\alpha\in\Aut(\PSL_2(q))$ such that $x^\alpha=x$ and $y^\alpha=y^{-1}$.
\end{lemma}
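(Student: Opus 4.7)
The plan is to exhibit $\alpha$ as an involution in $\Cen_{\PGL_2(q)}(x)\subseteq\Aut(\PSL_2(q))$. The hypothesis $q\equiv3\pmod 4$ enters via Lemma~\ref{lem1}(b), which gives $\Cen_{\PSL_2(q)}(x)\cong\D_{q+1}$; a short extension of this (or an appeal to the normaliser of a non-split torus) yields $\Cen_{\PGL_2(q)}(x)\cong\D_{2(q+1)}=\Nor_{\PGL_2(q)}(T^*)$, where $T^*\cong\C_{q+1}$ is the cyclic non-split torus of $\PGL_2(q)$ having $x$ as its unique involution.

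Since the statement is invariant under simultaneous $\Aut(\PSL_2(q))$-conjugation of $(x,y)$, we may choose coordinates so that $x$ is the projective image of $\begin{pmatrix}0&1\\-1&0\end{pmatrix}$; then the fixed-point set of $x$ on $\PG(1,q^2)$ is the Galois-stable pair $F_x=\{[1:i],[1:-i]\}$, where $i\in\GF(q^2)\setminus\GF(q)$ satisfies $i^2=-1$ (which requires $q\equiv3\pmod 4$).

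Two easy cases are dispatched first. If $y^2=1$ then $y=y^{-1}$ and $\alpha:=1$ works. If $y$ is a non-involution of $T^*$ --- equivalently, the fixed-point set $F_y$ of $y$ on $\PG(1,q^2)$ equals $F_x$ --- then any reflection $w$ of the dihedral group $\Nor_{\PGL_2(q)}(T^*)$ inverts $T^*$ elementwise, so $\alpha:=w$ inverts $y$. In the remaining cases the Galois-stable set $F_y$ (of size $1$ or $2$) is disjoint from $F_x$, because any element of $\PGL_2(q)$ fixing a point of $F_x$ lies in $T^*$.

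The main construction defines $\alpha\in\PGL_2(q^2)$ by the image conditions $[1:i]\mapsto[1:-i]$, $[1:-i]\mapsto[1:i]$, together with either (a)~$Q\mapsto\bar Q$ if $F_y=\{Q,\bar Q\}$ has two elements, or (b)~$P\mapsto P$ if $F_y=\{P\}$ is a singleton. In each case $\alpha^2$ fixes three distinct points of $\PG(1,q^2)$, so $\alpha^2=1$, and $\alpha$ descends to an element of $\PGL_2(q)\subseteq\Aut(\PSL_2(q))$ by Galois-equivariance. Since $\alpha$ preserves $F_x$ setwise, $\alpha\in\Cen_{\PGL_2(q)}(x)$ and $x^\alpha=x$. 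To verify $y^\alpha=y^{-1}$: in case~(a) the swap of the two fixed points of the semisimple $y$ swaps its eigenspaces and converts $y$ into $y^{-1}$; in case~(b) we pass to coordinates with $P=\infty$, note that $\alpha$ is an upper-triangular involution of $\PGL_2(q)$, observe that the further requirement of centralising $x$ pins down a unique such element, and verify directly that it acts by $-1$ on the unipotent radical at $P$ and so takes $y$ to $y^{-1}$. This final explicit matrix check in case~(b) --- together with the structural assertion $\Cen_{\PGL_2(q)}(x)\cong\D_{2(q+1)}$ at the outset --- is the step I expect to be the chief technical obstacle.
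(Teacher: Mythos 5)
Your argument takes a genuinely different route from the paper's: the paper passes to the model $\PSU_2(q)\cong\PSL_2(q)$ and writes down an explicit conjugating matrix depending on the entries of $y$, whereas you work geometrically on $\PG(1,q^2)$, building $\alpha$ from the three-point determination of a homography. Most of your outline checks out: the reduction to $x=\begin{pmatrix}0&1\\-1&0\end{pmatrix}$ modulo scalars, the observation that $F_x\cap F_y=\emptyset$ once $y\notin T^*$ (an element of $\PGL_2(q)$ fixing one point of the non-rational pair $F_x$ fixes both, hence lies in $T^*$), the $T^*$-case via a reflection of $\Nor_{\PGL_2(q)}(T^*)$, the Galois-equivariance argument for rationality of $\alpha$, and case (b): in coordinates with $P=\infty$ and $F_x=\{\pm i\}$ the unique $\alpha$ is $z\mapsto -z$, which commutes with $z\mapsto -1/z$ and inverts every translation.

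There is, however, a concrete gap in case (a): you write $F_y=\{Q,\overline{Q}\}$, which tacitly assumes the two fixed points of the semisimple element $y$ form a Galois-conjugate pair. A split semisimple $y$ (eigenvalues in $\mathbb{F}_q$; such non-involutions exist for every $q\geqslant7$ since $(q-1)/2\geqslant3$) has both fixed points in $\PG(1,q)$, so $\overline{Q}=Q$ and your prescription ``$Q\mapsto\overline{Q}$'' degenerates to ``$Q\mapsto Q$'', which fails: for instance with $F_y=\{0,\infty\}$ and $F_x=\{\pm i\}$, the unique homography with $i\mapsto -i$, $-i\mapsto i$, $0\mapsto 0$ is $z\mapsto -z$, which centralizes every diagonal $y$ rather than inverting it. The repair is immediate and stays within your method: in the two-fixed-point case send $Q$ to the \emph{other} fixed point of $y$ (whether or not it equals $\sigma(Q)$). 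Then $\alpha$ interchanges the two distinct points $[1{:}i]$ and $[1{:}-i]$, hence is an involution by the standard fact that a homography of the projective line exchanging two distinct points has order two --- note you need this fact anyway, since in case (a) your stated reason ``$\alpha^2$ fixes three distinct points'' does not literally apply ($\alpha^2$ visibly fixes only the two points of $F_x$; only in case (b) is $P$ a third fixed point). Being an involution, $\alpha$ swaps both fixed points of $y$ and therefore inverts $y$ by the torus-normalizer argument you already invoke, and the same Frobenius computation shows $\alpha\in\PGL_2(q)$. With these adjustments your proof is complete.
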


\begin{proof}
Appeal to the isomorphism $\PSL_2(q)\cong\PSU_2(q)$. Let $i$ be an element of order four in $\mathbb{F}_{q^2}^\times$, $G=\PSU_2(q)$, $A=\Aut(G)$ and $\psi$ be the homomorphism from $\GU_2(q)$ to $G$ modulo $\Z(\GU_2(q))$. In view of Lemma~\ref{lem1} we may assume that
$$
x=
\begin{pmatrix}
i&0\\
0&-i
\end{pmatrix}
^\psi
$$
($x$ is indeed an element of $\PSL_2(q)$ since $i^2=-1$). By~\cite[II~8.8]{Huppert1967} one has
$$
y=
\begin{pmatrix}
a&b\\
-b^q&a^q
\end{pmatrix}
^\psi,
$$
where $a,b\in\mathbb{F}_{q^2}$ such that $a^{q+1}+b^{q+1}=1$. Then set $c=b^{q-1}$ if $b\neq0$, and $c=1$ if $b=0$. Define
$$
\alpha:g\mapsto
\begin{pmatrix}
0&c^q\\
1&0
\end{pmatrix}
^\psi g
\begin{pmatrix}
0&1\\
c&0
\end{pmatrix}
^\psi
$$
for any $g\in G$. It is straightforward to verify that $\alpha\in A$, $x^\alpha=x$ and $y^\alpha=y^{-1}$. Thus the lemma is true.
\end{proof}

We remark that if $q\not\equiv3\pmod{4}$ then the conclusion of Lemma~\ref{lem6} may not hold. For example, when $q=8$ or $13$, respectively, there exist $x,y\in\PSL_2(q)$ with $o(x)=2$ and $o(y)>2$ such that $\{x,y\}$ does not generate $\PSL_2(q)$ and $\Aut(\PSL_2(q),\{x,y,y^{-1}\})=1$.

\vskip0.1in
Now we are able to prove Theorem~\ref{thm4} and Proposition~\ref{thm1}.

\vskip0.1in
\noindent\textbf{Proof of Theorem~\ref{thm4}:} For $q\in\{8,9,11,13,16,17,19,25,27\}$, computation in \magma~\cite{magma} shows that there exist involutions $x,y,z\in\PSL_2(q)$ such that $\langle x,y,z\rangle=\PSL_2(q)$ and $\Aut(\PSL_2(q),\{x,y,z\})=1$ (one can further take $x,y$ to be the generators of $\D_{2(q+1)/\gcd(2,q-1)}$). This together with Proposition~\ref{prop1} and Lemma~\ref{lem5} indicates that whenever $q\geqslant8$, there exist involutions $x,y,z\in\PSL_2(q)$ such that $\langle x,y,z\rangle=\PSL_2(q)$ and $\Aut(\PSL_2(q),\{x,y,z\})=1$. Hence by Theorem~\ref{thm3}, $\PSL_2(q)$ has a cubic GRR if $q\geqslant8$ and $q\neq11$. Moreover, the existence of cubic GRRs for $\PSL_2(5)$ and $\PSL_2(11)$ was proved in~\cite[Remarks on Theorem~1.3]{FLWX2002}. Therefore, $\PSL_2(q)$ has a cubic GRR provided $q\neq7$.

It remains to show that $\PSL_2(7)$ has no cubic GRR. Suppose on the contrary that $\Cay(\PSL_2(7),S)$ is a cubic GRR of $\PSL_2(7)$. Then according to Lemma~\ref{lem6}, $S$ must be a set of three involutions. However, computation in \magma~\cite{magma} shows that for any involutions $x,y,z$ with $\langle x,y,z\rangle=\PSL_2(7)$, there exists an involution $\alpha\in\PGL_2(7)$ such that $\{x^\alpha,y^\alpha,z^\alpha\}=\{x,y,z\}$. This contradiction completes the proof.
\qed

\vskip0.1in
\noindent\textbf{Proof of Proposition~\ref{thm1}:} By contradiction, suppose that $S$ does not consist of three involutions. Then since $1\notin S$ and $S=S^{-1}$, we deduce that $S=\{x,y,y^{-1}\}$ with $o(x)=2$ and $o(y)>2$. It follows that $\langle x,y\rangle=G$.

First assume that $q$ is even. By virtue of Lemma~\ref{lem1}, $x$ can be taken as any involution in $G=\SL_2(q)$, whence we may assume that
$$
x=
\begin{pmatrix}
1&1\\
0&1
\end{pmatrix}.
$$
Suppose
$$
y=
\begin{pmatrix}
a&b\\
c&d
\end{pmatrix},
$$
where $a,b,c,d\in\mathbb{F}_q$ such that $ad-bc=1$. If $c=0$, then $\langle x,y\rangle$ is contained in the group of upper triangular matrices in $\SL_2(q)$, impossible because $\langle x,y\rangle=G$. Consequently, $c\neq0$. Set
$$
h=
\begin{pmatrix}
c&a+d\\
0&c
\end{pmatrix},
$$
and define $\alpha:g\mapsto h^{-1}gh$ for any $g\in G$. Then $\alpha\in A$, and one sees easily that $x^\alpha=x$ and $y^\alpha=y^{-1}$. This implies $\Aut(G,S)\neq1$, contrary to the condition that $\Cay(G,S)$ is a GRR of $G$.

Next assume that $q\equiv1\pmod{4}$. Let $\omega$ be an element of order four in $\mathbb{F}_q^\times$, and $\varphi$ be the homomorphism from $\GL_2(q)$ to $\PGL_2(q)$ modulo $\Z(\GL_2(q))$. Due to Lemma~\ref{lem1}, we may assume that
$$
x=
\begin{pmatrix}
\omega&0\\
0&-\omega
\end{pmatrix}
^\varphi
$$
($x$ is indeed an element of $\PSL_2(q)$ since $\omega^2=-1$). Suppose
$$
y=
\begin{pmatrix}
a&b\\
c&d
\end{pmatrix}
^\varphi,
$$
where $a,b,c,d\in\mathbb{F}_q$ such that $ad-bc=1$. If $bc=0$, then the preimage of $\langle x,y\rangle$ under $\varphi$ is contained either in the group of upper triangular matrices or in the group of lower triangular matrices in $\SL_2(q)$. Hence $bc\neq0$ as $\langle x,y\rangle=G$. Set
$$
h=
\begin{pmatrix}
0&b\\
-c&0
\end{pmatrix}
^\varphi,
$$
and define $\alpha:g\mapsto h^{-1}gh$ for any $g\in G$. Then $\alpha\in A$, and it is direct to verify that $x^\alpha=x$ and $y^\alpha=y^{-1}$. This implies $\Aut(G,S)\neq1$, contrary to the condition that $\Cay(G,S)$ is a GRR of $G$.

Finally assume that $q\equiv3\pmod{4}$. Then we derive from Lemma~\ref{lem6} that $\Aut(G,S)\neq1$, again a contradiction to the condition that $\Cay(G,S)$ is a GRR of $G$. The proof is thus completed.
\qed

We conclude with two problems on cubic GRRs for other families of finite simple groups. First, in view of Theorem~\ref{thm4}, a natural problem is to determine which finite nonabelian simple groups have no cubic GRR. Such groups would be vary rare, and we conjecture that there are only finitely many of them.

\begin{conjecture}
There are only finitely many finite nonabelian simple groups that have no cubic GRR.
\end{conjecture}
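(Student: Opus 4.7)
The plan is to reduce the conjecture to the finite simple groups of Lie type and then, family by family, to imitate the counting argument of Proposition~\ref{prop1}. The alternating groups are covered by Godsil~\cite{Godsil1983}, the Suzuki groups by~\cite{FLWX2002}, the groups $\PSL_2(q)$ by Theorem~\ref{thm4}, and the $26$ sporadic groups by direct \magma~\cite{magma} computation. What remains is to show that for each of the remaining infinite families $\{G_n\}$ of simple groups of Lie type, all but finitely many members admit a cubic GRR. For each such family I would combine a Theorem~\ref{thm3}-type reduction (for $G_n$ large enough, a cubic Cayley graph is a GRR iff $\Aut(G_n,S)=1$) with a Proposition~\ref{prop1}-type counting of involutions $z$ for which both $\langle x,y,z\rangle=G_n$ and $\Aut(G_n,\{x,y,z\})=1$.

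For the counting step I would, in each family, pick a maximal subgroup $H$ of $G_n$ that is generated by two involutions — the normalizer of a maximal torus is the first candidate, mirroring the dihedral subgroup $\D_{2(q+1)/d}$ used here — fix involutions $x,y$ with $\langle x,y\rangle=H$, and estimate the two sources of bad $z$: the set $L$ of involutions forced by outer automorphisms fixing the pair $\{x,y\}$, bounded by $|\Out(G_n)|\cdot|\Cen_{G_n}(x)|$ as in~\eqref{eq1}, and the union $\bigcup_{\alpha\in I}\Cen_{G_n}(\alpha)$ over swapping involutions $\alpha$, bounded by involution-centralizer orders in $\Aut(G_n)$ as in~\eqref{eq2}. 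The standard bounds on $|\Out(G_n)|$ and Aschbacher--Seitz-type data on $|\Cen_{G_n}(\alpha)|$ for involutions $\alpha\in\Aut(G_n)$, together with the usual lower bound $|V|\geqslant|G_n|/|\Cen_{G_n}(x)|$, should make the counting work asymptotically, since $|G_n|$ grows much faster in $n$ than the product $|\Out(G_n)|\cdot|\Cen_{G_n}(x)|$.

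Verifying that $\langle x,y,z\rangle = G_n$ for a large proportion of $z\notin H$ is the point at which the rank-one argument here needs genuine new input: in higher rank, one must rule out every proper overgroup of $\langle x,y\rangle$, not just $H$ itself. The natural tool is the classification of maximal subgroups of $G_n$ (Aschbacher's theorem for classical groups, Liebeck--Seitz for exceptional groups), applied in the style of random-generation arguments of Liebeck--Shalev: almost all involutions $z\in G_n\setminus H$ fail to lie in any common maximal overgroup of $\{x,y\}$. Merging this ``generation with high probability'' input with the centralizer counting gives, in each family, a threshold $n_0$ beyond which a good $z$ exists.

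The hard part is twofold. First, a single uniform choice of $H$ across a whole family may not always exist; the dihedral argument is special to $\PSL_2$, and for groups of large rank one may need to choose $H$ differently depending on congruence conditions, with the attendant combinatorial bookkeeping. Second, and unavoidably, each family will produce a small bounded set of exceptional $n$ below the threshold where the counting inequality fails, and some of these may genuinely lack a cubic GRR — precisely the $\PSL_2(7)$ phenomenon. For each such borderline case one needs a separate argument: a \magma~\cite{magma} search when $|G_n|$ permits, or bespoke group-theoretic reasoning (e.g.\ for exceptional groups over small fields, where brute force is infeasible) when it does not. Producing an effective threshold and efficient techniques for these residual cases is where the bulk of the real work will live.
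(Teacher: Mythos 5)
The statement you are addressing is posed in the paper as a conjecture, with no proof offered, so your text has to stand entirely on its own; and as written it is a research programme rather than a proof. You correctly identify the available ingredients (Godsil for alternating groups, \cite{FLWX2002} for Suzuki groups, Theorem~\ref{thm4} here for $\PSL_2(q)$, and a Theorem~\ref{thm3}-type reduction which \cite{FLWX2002} indeed supplies for all finite simple groups up to a short list of exceptions), but every remaining family is handled only by the assertion that a Proposition~\ref{prop1}-style count ``should work asymptotically''. No inequality is established, no threshold is produced, and the residual small cases are explicitly deferred. Since the conjecture is precisely the claim that all but finitely many simple groups admit a cubic GRR, nothing beyond the already-known cases is actually proved.

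There is also a concrete flaw in the plan, not just missing detail: any group generated by two involutions is dihedral, so a maximal subgroup $H=\langle x,y\rangle$ can exist only when $G_n$ possesses a maximal dihedral subgroup. That is a rank-one phenomenon --- it is exactly why Proposition~\ref{prop1} works for $\PSL_2(q)$ with $H=\D_{2(q+1)/d}$ --- and your ``first candidate'', the normalizer of a maximal torus, is never dihedral once the rank is at least two, hence cannot equal $\langle x,y\rangle$. Consequently the key mechanism of Proposition~\ref{prop1}, namely that every involution $z$ outside the maximal subgroup $H$ automatically gives $\langle x,y,z\rangle=G$, does not transfer: in higher rank $\langle x,y\rangle$ is a non-maximal dihedral subgroup and one must control \emph{all} maximal overgroups containing $\{x,y,z\}$. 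You acknowledge this needs Aschbacher/Liebeck--Seitz data and Liebeck--Shalev-style probabilistic generation, but that is exactly the content of the conjecture for those families, and it is left unproved. The bound on $|L|$ and on the swapping involutions $\alpha\in I$ likewise depends on the explicit structure of $\Nor_A(H)=\C_{q+1}\rtimes\C_{2f}$ used in the paper and would have to be redone from scratch for each family. In short: the reduction to groups of Lie type is fine, but the two steps that carry all the weight --- uniform generation estimates in higher rank and the treatment of the bounded exceptional cases --- are precisely the open part of the problem, so the proposal does not constitute a proof.
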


\begin{problem}
Classify the finite nonabelian simple groups that have no cubic GRR.
\end{problem}

Second, as it is shown in Proposition~\ref{thm1} that for finite simple groups $\PSL_2(q)$, a GRR can only be made from three involutions, one would ask what is the situation for other finite nonabelian simple groups, which is the problem below.

\begin{problem}
Classify the finite nonabelian simple groups $G$ having no GRR of form $\Cay(G,\{x,y,y^{-1}\})$, where $o(x)=2$ and $o(y)>2$.
\end{problem}

\noindent\textsc{Acknowledgements.}
The first author acknowledges the support of China Postdoctoral Science Foundation Grant 2014M560838 and National Science Foundation of China grant 11501011. The authors would like to thank the anonymous referees for their comments to improve the presentation.

\end{document}